\documentclass[11pt]{amsart}

\usepackage{amssymb}
\usepackage{mathtools}
\usepackage[dvipsnames]{xcolor}
\usepackage{fullpage}
\usepackage[hypertexnames=false]{hyperref}
\usepackage{aliascnt}
\usepackage{mathrsfs}

\usepackage[capitalize,nameinlink,noabbrev,nosort]{cleveref}
\hypersetup{
  colorlinks=true,
  allcolors=PineGreen
}

\theoremstyle{plain}
\newtheorem{theorem}{Theorem}[section]

\newaliascnt{proposition}{theorem}
\newtheorem{proposition}[proposition]{Proposition}
\aliascntresetthe{proposition}

\newaliascnt{conjecture}{theorem}
\newtheorem{conjecture}[conjecture]{Conjecture}
\aliascntresetthe{conjecture}
\crefname{conjecture}{Conjecture}{Conjectures}
\Crefname{conjecture}{Conjecture}{Conjectures}

\newaliascnt{lemma}{theorem}
\newtheorem{lemma}[lemma]{Lemma}
\aliascntresetthe{lemma}

\theoremstyle{definition}

\newaliascnt{definition}{theorem}
\newtheorem{definition}[definition]{Definition}
\aliascntresetthe{definition}

\crefname{definition}{Definition}{Definitions}

\numberwithin{equation}{section}

\newcommand{\scrC}{\mathscr{C}}
\newcommand{\bbC}{\mathbb{C}}
\newcommand{\bbP}{\mathbb{P}}
\newcommand{\bbQ}{\mathbb{Q}}

\begin{document}

\title{A proof of Hirose's duality conjecture}

\author{Shin-ichiro Seki}
\thanks{This research was supported by JSPS KAKENHI Grant Number JP26K06734.}
\address{Nagahama Institute of Bio-Science and Technology, 1266, Tamura, Nagahama, Shiga, 526-0829, Japan}
\email{s\_seki@nagahama-i-bio.ac.jp}
\subjclass[2020]{Primary 11M32; Secondary 05A30, 33D15}
\keywords{duality, iterated $q$-integrals, connected sums}
\begin{abstract}
Hirose formulated a duality conjecture for a $q$-discretization of iterated integrals on the four-punctured projective line, incorporating word-dependent $q$-shifts of the parameters.
In this paper, we prove this conjecture using a symmetric terminating ${}_{4}\phi_{3}$ connector.
\end{abstract}
\maketitle
\section{Introduction}
The \emph{duality} relations constitute one of the most fundamental families of relations among multiple zeta values (see \cite{Hoffman,Zagier}).
Nevertheless, duality remains a subtle and mysterious phenomenon: it is still unknown, for example, whether these relations follow from the extended double shuffle relations.

Hirose, Iwaki, Sato, and Tasaka~\cite[Theorem~1.1]{HiroseIwakiSatoTasaka} generalized this duality to iterated integrals on $\bbP^1(\bbC)\setminus\{0,1,\infty,z\}$ by considering the M\"obius transformation interchanging $0$ with $1$ and $\infty$ with $z$.
The usual duality for multiple zeta values is recovered in the limit $z\to\infty$, which corresponds to the confluence of the two punctures $z$ and $\infty$.
In \cite{HiroseSato}, Hirose and Sato developed a theory of iterated integrals on $\bbP^1(\bbC)\setminus\{0,1,\infty,z\}$ based on differentiation with respect to $z$, and introduced \emph{confluence relations} among multiple zeta values.
These relations are of particular significance, as they are conjectured to generate all $\bbQ$-linear relations among multiple zeta values.

More generally, let $A,B,C,D$ be four distinct points of $\bbP^1(\bbC)$, and consider the logarithmic differential forms
\[
\omega_{u,v}=\left(\frac{1}{t-u}-\frac{1}{t-v}\right)\mathrm{d}t
\]
for $(u,v)\in\{(A,B),(A,C),(A,D),(B,C),(B,D),(C,D)\}$.
With a suitable choice of integration path, the corresponding iterated integrals on $\bbP^1(\bbC)\setminus\{A,B,C,D\}$ also satisfy a duality arising from the M\"obius transformation interchanging $A$ with $D$ and $B$ with $C$ (see \cite[Section~1.1 and Theorem~1]{Hirose}).
This duality follows immediately from a change of variables in the iterated integrals.

Several $q$-analogues of duality are also known.
Bradley~\cite[Corollary~3]{Bradley} proved duality for $q$-multiple zeta values in the Bradley--Zhao model, and Zhao~\cite[Theorem~8.3]{Zhao} proved the corresponding result for the Schlesinger--Zudilin model.
Using a ${}_2\phi_1$ connector, Yamamoto~\cite[Theorem~3.2, the case $\epsilon=1$]{Yamamoto} proved a duality for $q$-analogues of one-variable multiple polylogarithms.
This result generalizes both of these $q$-dualities as well as the duality of Hirose, Iwaki, Sato, and Tasaka for iterated integrals on $\bbP^1(\bbC)\setminus\{0,1,\infty,z\}$.

Recently, Hirose~\cite{Hirose} introduced a discretization on a $q$-lattice of iterated integrals on $\bbP^1(\bbC)\setminus\{A,B,C,D\}$ for four generic points $A,B,C,D$, using words in the six letters corresponding to
the pairs of points.
Through numerical experiments, he found suitable word-dependent
$q$-shifts of the poles and conjectured that the resulting finite sums
satisfy duality for all admissible words.
In contrast to the classical case, the underlying M\"obius transformation does not in general preserve the $q$-lattice, so the change of variables argument does not apply directly.
It is therefore remarkable that the classical duality is expected to persist at the level of finite sums, through the same operation of reversing the word and applying an involution to the six letters.
As explained in \cite[Section~3]{Hirose}, this conjecture also includes Yamamoto's duality as a special case.
We now introduce the notation needed to state Hirose's conjecture precisely.

Fix a nonnegative integer $N$.
Let $q, A, B, C, D$ be parameters.
Assume that $q, A, B, C$ are algebraically independent over $\bbQ$ and $A=q^ND$; in particular $q\neq 0$ and $q$ is not a root of unity.
We work in $\bbQ(q,A,B,C)$.
In this paper, a word is a finite sequence of the six formal letters $AB$, $AC$, $AD$, $BC$, $BD$, $CD$.
The empty word is denoted by $1$.
We write each letter in the form $uv$, where $uv$ denotes a single formal symbol rather than the product of $u$ and $v$.
By a slight abuse of notation, the symbols $u$ and $v$, when used separately, denote the corresponding parameters.
For example, when $uv=AB$, we take $u=A$ and $v=B$.

For each nonnegative integer $n$, we write $[n]\coloneqq\{1,2,\ldots,n\}$, with the convention that $[0]=\varnothing$.
For a word $w=u_1v_1\cdots u_kv_k$, we call $k$ the \emph{length} of $w$.
In particular, the empty word $1$ has length $0$.
For $i\in[k]$, set
\[
w_{[i]}\coloneqq u_1v_1\cdots u_iv_i,\qquad w^{[i]}\coloneqq u_iv_i\cdots u_kv_k,
\]
with the convention that $w_{[0]}=w^{[k+1]}=1$.

For $uv\in\{AB,AC,AD,BC,BD,CD\}$, we use the shorthand notation
\[
\#_{uv}(w)\coloneqq \#\{i\in[k]\mid u_iv_i=uv\}.
\]
For $X\in\{A,B,C,D\}$, let $\#_{\hat{X}}(w)$ denote the number of letters of $w$ not containing $X$.

A nonempty word $w$ is \emph{admissible} if it starts with $BC$, $BD$, or $CD$ and ends with $AB$, $AC$, or $BC$.
The empty word $1$ is also regarded as admissible.
\begin{definition}[Hirose~\cite{Hirose}]
For a word $w=u_1v_1\cdots u_kv_k$ and $i\in[k]$, put
\begin{align*}
A^{(i),w}&\coloneqq Aq^{\#_{\hat{A}}(w_{[i]})},\\
B^{(i),w}&\coloneqq Bq^{\#_{\hat{A}}(w_{[i]})+\#_{AB}(w_{[i]})+\#_{CD}(w^{[i]})},\\
C^{(i),w}&\coloneqq Cq^{\#_{\hat{A}}(w_{[i]})+\#_{AC}(w_{[i]})+\#_{BD}(w^{[i]})},\\
D^{(i),w}&\coloneqq Dq^{-\#_{\hat{D}}(w^{[i]})}.
\end{align*}
The superscripts on $u_i$ and $v_i$ indicate the corresponding shifted parameters; for example, if $u_iv_i=AB$, then $u_i^{(i),w}=A^{(i),w}$ and $v_i^{(i),w}=B^{(i),w}$.

For a nonempty admissible word $w=u_1v_1\cdots u_kv_k$, we define the \emph{iterated $q$-integral} $L_q(w)$ by
\[
L_q(w)\coloneqq \sum_{0\leq n_1\leq\cdots\leq n_k\leq N}\prod_{i=1}^k\left(\frac{Aq^{-n_i}}{Aq^{-n_i}-u_i^{(i),w}}-\frac{Aq^{-n_i}}{Aq^{-n_i}-v_i^{(i),w}}\right).
\]
We also set $L_q(1)\coloneqq 1$.
\end{definition}
Admissibility ensures that none of the denominators occurring in this sum vanishes.

Let $\tau$ be the anti-involution that reverses the order of the letters and acts on individual letters by
\[
\begin{aligned}[c]
\tau(AB)&=CD,\quad & \tau(AC)&=BD,\\
\tau(CD)&=AB, & \tau(BD)&=AC,
\end{aligned}
\qquad
\tau(AD)=AD,\qquad \tau(BC)=BC.
\]
In particular, $\tau(1)=1$, and $\tau$ preserves admissibility.

In this setting, Hirose conjectured the following duality.
\begin{conjecture}[Hirose~\cite{Hirose}]\label{conj:main}
For every admissible word $w$, we have
\[
L_q(w)=L_q(\tau(w)).
\]
\end{conjecture}
Hirose proved several special cases of this conjecture in \cite[Section 4]{Hirose}.
In this paper, we prove Hirose's conjecture in full generality.
\begin{theorem}\label{thm:main}
\cref{conj:main} is true.
\end{theorem}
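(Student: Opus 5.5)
We prove \cref{thm:main} by the connected-sum method, as suggested by the abstract. Write an admissible word as $w=w_{[i]}w^{[i+1]}$ and try to express $L_q(w)$ as a ``connected sum''
\[
Z(w_{[i]}\mid \tau(w^{[i+1]}))=\sum_{0\le n\le m\le N}(\text{prefix partial sum of }w_{[i]}\text{ ending at }n)\cdot \scrC(n,m)\cdot(\text{prefix partial sum of }\tau(w^{[i+1]})\text{ ending at }N-m).
\]
Here the connector $\scrC(n,m)$ is a terminating ${}_4\phi_3$-type sum. It should be symmetric under $n\leftrightarrow N-m$ combined with the parameter involution $(A,B,C,D)\mapsto(D,C,B,A)$ (suitably $q$-shifted, using $A=q^ND$). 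At $i=k$ the connected sum should equal $L_q(w)$, because the connector should collapse to $\delta$-type boundary values. At $i=0$ it should equal $L_q(\tau(w))$, by the symmetry of the connector. The theorem then follows from the transport relations
\[
Z(w_{[i]}\mid \tau(w^{[i+1]}))=Z(w_{[i-1]}\mid \tau(w^{[i]})),
\]
which move one letter at a time from the left part to the right part.

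The concrete steps, in order, are as follows.

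First, define the truncated sums $L_q^{(n)}(v)$ for a prefix word $v$. These are the same sums as $L_q(v)$, but with $n_k=n$ fixed and the shifts of $A,B,C,D$ taken from the full word $w$. Because the exponents $\#_{CD}(w^{[i]})$, $\#_{BD}(w^{[i]})$ and $\#_{\hat D}(w^{[i]})$ depend on the suffix, the shifts of a prefix letter depend on how many $CD$, $BD$ and non-$D$ letters remain. To handle this, we record these counts as extra parameter shifts $B\mapsto Bq^{b}$, $C\mapsto Cq^{c}$, $D\mapsto Dq^{-d}$ on the left part. Dually, we record $A\mapsto Aq^{a}$, $B\mapsto Bq^{\beta}$ and similar shifts on the right part.

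Second, choose the connector. A natural candidate is
\[
\scrC(n,m)=\frac{(q;q)_N\cdots}{(q;q)_{n}(q;q)_{N-m}\cdots}\,{}_4\phi_3\!\left(\begin{matrix}q^{-n},\,q^{-(N-m)},\,\ast,\,\ast\\ \ast,\,\ast,\,\ast\end{matrix};q,q\right),
\]
with parameters built from $B/A$, $C/A$, $D/B$, $D/C$ and the current shift exponents. It should be balanced, so that Sears' transformation supplies the required symmetry. The connector must carry the shift data, and moving one letter changes those data by one in a single exponent. We therefore expect a family $\scrC_{a,b,c,d}$ in which each transport step shifts one index.

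Third, verify the six transport identities, one for each letter $uv$. Each takes the form
\[
\Bigl(\tfrac{Aq^{-n}}{Aq^{-n}-u'}-\tfrac{Aq^{-n}}{Aq^{-n}-v'}\Bigr)\sum_{n'\ge n}\scrC'(n',m)=\sum_{m'\le m}\scrC(n,m')\Bigl(\tfrac{\cdots}{\cdots-\tau(u)'}-\cdots\Bigr),
\]
or, equivalently, a first-order difference equation in $n$ and $m$. Using the involution $\tau$, the six cases reduce to four: $AB\leftrightarrow CD$, $AC\leftrightarrow BD$, $AD$, and $BC$. Each of these is a contiguous relation for terminating balanced ${}_4\phi_3$ series. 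We would check each one by comparing coefficients termwise in the ${}_4\phi_3$ summation index, where it should reduce to a telescoping rational identity.

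Fourth, check the boundary cases. At $i=k$ the connector must reduce to the kernel of summation against $n\le m\le N$. At $i=0$, admissibility of $w$ is exactly what guarantees that no denominators vanish and that the extreme terms $q^{-0}$ behave correctly.

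We expect the main obstacle to be the second and third steps together: finding the exact parameters of the ${}_4\phi_3$ connector, together with its dependence on the shift counts, so that all six word-dependent $q$-shifts of Hirose's definition are reproduced simultaneously. Our first attempt would be to take Yamamoto's ${}_2\phi_1$ connector for the case $C\to\infty$ or $B\to 0$ as a template. We would then guess the ${}_4\phi_3$ by requiring balance and symmetry, and fit the exponents by computing small cases ($N\le 2$, words of length at most $2$) symbolically. Once the parameters are fixed, the contiguous relations should be routine verifications.
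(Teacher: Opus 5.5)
Your outline has the same architecture as the paper's proof: a connected sum $Z(w,w')$ with a ${}_4\phi_3$ connector, the symmetry $Z(w,w')=Z(w',w)$, the boundary value $Z(w,1)=L_q(w)$, and the transport $Z(wuv,w')=Z(w,w'\tau(uv))$ applied one letter at a time. Your bookkeeping of the suffix counts $\#_{CD},\#_{BD},\#_{\hat D}$ as extra shifts $Bq^{\#_{AB}(w')}$, $Cq^{\#_{AC}(w')}$, $Dq^{-\#_{\hat A}(w')}$ of the left letters is also what the paper does. The right letters get the mirror-image shifts $Bq^{\#_{AB}(w)}$, $Cq^{\#_{AC}(w)}$, $Dq^{-\#_{\hat A}(w)}$, with no shift of $A$. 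But the two steps you leave open are the entire proof: the connector is never specified, and the transport identities are only expected to be routine.

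The connector the paper uses is also not of the shape you sketch, which terminates through $q^{-n}$ and $q^{-(N-m)}$. Writing $m$ for the right endpoint itself (your $N-m$), it is an explicit $q$-product times
\[
{}_4\phi_3\!\left(\begin{matrix}q^{-(N-n-m)},\,q,\,q/\beta,\,q/\gamma\\ q^{n+h+1},\,q^{m+h'+1},\,q^{2-N-h-h'}/(\beta\gamma)\end{matrix};q,q\right),\qquad h=\#_{\hat A}(w),\quad h'=\#_{\hat A}(w'),
\]
where $\beta=\frac{B}{A}q^{1+\#_{AB}(w)+\#_{AB}(w')}$ and $\gamma=\frac{C}{A}q^{1+\#_{AC}(w)+\#_{AC}(w')}$. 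It terminates at $N-n-m$ and contains the parameter $q$, because it is really a tail sum $\sum_{a=n}^{N-m}$ of a hypergeometric term. The symmetry needed is simply $\scrC_{w,w'}(n,m)=\scrC_{w',w}(m,n)$ with the same $A,B,C,D$, which is visible from the formula. Bringing in $(A,B,C,D)\mapsto(D,C,B,A)$, as you propose, would produce $L_q(\tau(w))$ at the wrong parameters.

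Your verification plan also misses the mechanism that makes the transport provable. The identity needed is
\[
\sum_{a=n}^{N-m}f(a)\,\scrC_{wuv,w'}(a,m)=\sum_{b=m}^{N-n}\scrC_{w,w'\tau(uv)}(n,b)\,g(b),
\]
where $f$ and $g$ are the factors of the moving letter $uv$ and of $\tau(uv)$. The factor $f$ must sit inside the sum, at the new variable $a$. Your display puts it outside, at $n$, which is not what the connected sum requires. Both sides are double sums of ${}_4\phi_3$'s of different lengths and parameters, so no termwise comparison in the series index is available.

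The paper instead applies telescoping to the connector itself. A $q$-WZ pair in the series index shows that $\scrC_{w,w'}(n,m)-\scrC_{w,w'}(n+1,m)$ is a closed-form $q$-product. Differencing the transport identity in both endpoints then reduces it to a double-difference identity among such products. After dividing out, this becomes an elementary identity between quadratics in $t=Aq^{-i}$, with $x=q^{N-i-j}$, and it has the same form for all six letters. The same closed form yields the boundary condition $\scrC_{w,1}(n,0)=1$ for every $0\le n\le N$: the difference contains the factor $(q^{m+\#_{\hat A}(w')};q)_{N-n-m}$, which vanishes at $m=0$, $w'=1$. Without the explicit connector and this closed-form first difference, your proposal is a research plan rather than a proof.
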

Our proof is based on the \emph{connected sum method}, introduced by the author and Yamamoto~\cite{SekiYamamoto} and further developed in subsequent work.
A difficulty in proving Hirose's conjecture is that the change of variables argument using a M\"obius transformation does not apply directly to the $q$-lattice.
For multiple zeta values, however, the author and Yamamoto~\cite{SekiYamamoto} gave a proof of duality that does not rely on a change of variables in iterated integrals.
Their proof uses the following connected sum.
Let $\boldsymbol{k}=(k_1,\ldots,k_r)$ and $\boldsymbol{l}=(l_1,\ldots,l_s)$ be indices of positive integers.
Assume that either both indices are nonempty, or one is empty
and the other is admissible.
Here a nonempty index is admissible if its last entry is at least $2$.
Define
\[
Z^{\mathrm{SY}}(\boldsymbol{k};\boldsymbol{l})\coloneqq\sum_{\substack{0<n_1<\cdots<n_r\\0<m_1<\cdots<m_s}}\prod_{i=1}^{r}\frac{1}{n_i^{k_i}}\cdot\frac{n_r!\,m_s!}{(n_r+m_s)!}\cdot\prod_{j=1}^{s}\frac{1}{m_j^{l_j}}.
\]
Empty products are understood to be $1$, and the summation on an
empty side consists of a single empty tuple, with endpoint $0$.
The factor
\[
\frac{n!\,m!}{(n+m)!}
\]
is called the \emph{connector}.
This factor satisfies the elementary identities
\[
\sum_{a=n+1}^{\infty}\frac{1}{a}\cdot\frac{a!\,m!}{(a+m)!}=\frac{n!\,m!}{(n+m)!}\cdot\frac{1}{m},\qquad \frac{1}{n}\cdot\frac{n!\,m!}{(n+m)!}=\sum_{b=m+1}^{\infty}\frac{n!\,b!}{(n+b)!}\cdot\frac{1}{b},
\]
where $n\geq0$ and $m\geq1$ in the first identity, and $n\geq1$ and $m\geq0$ in the second.
These identities give rise to \emph{transport relations}, which allow one unit of weight to be moved from the left index to the right:
\begin{align*}
Z^{\mathrm{SY}}(k_1,\dots, k_r,1;l_1,\dots,l_s)&=Z^{\mathrm{SY}}(k_1,\dots, k_r;l_1,\dots,l_s+1),\\
Z^{\mathrm{SY}}(k_1,\dots, k_r+1;l_1,\dots,l_s)&=Z^{\mathrm{SY}}(k_1,\dots, k_r;l_1,\dots,l_s,1).
\end{align*}
The first relation applies when $s\geq1$, and the second when $r\geq1$, provided that the connected sums are defined.
Unlike the classical change of variables argument, this procedure does not establish duality in a single step.
Repeated transport, however, moves all the weight to the right and produces the dual index, thereby proving duality.

Although the iterated $q$-integrals considered in this paper are finite sums, they are defined by discretizing iterated integrals rather than by truncating multiple series.
It is therefore natural to rewrite the Seki--Yamamoto connected sum in terms of iterated integrals, obtaining what we may call a \emph{connected iterated integral}.
For this expression, put
\[
\omega_0(t)\coloneqq\frac{\mathrm{d}t}{t},\qquad\omega_1(t)\coloneqq\frac{\mathrm{d}t}{1-t}.
\]
For nonempty indices $\boldsymbol{k}$ and $\boldsymbol{l}$, let $k\coloneqq k_1+\cdots+k_r$ and $l\coloneqq l_1+\cdots+l_s$, and write
\[
\omega_{\epsilon_1}\cdots\omega_{\epsilon_k}=\omega_1\omega_0^{k_1-1}\cdots\omega_1\omega_0^{k_r-1},\qquad\omega_{\eta_1}\cdots\omega_{\eta_l}=\omega_1\omega_0^{l_1-1}\cdots\omega_1\omega_0^{l_s-1}
\]
for the corresponding words in these differential forms.
Then the same connected sum has the integral expression
\[
Z^{\mathrm{SY}}(\boldsymbol{k};\boldsymbol{l})=\int_{\substack{0<t_1<\cdots<t_k<1\\0<s_1<\cdots<s_l<1}}\prod_{i=1}^{k}\omega_{\epsilon_i}(t_i)\cdot\mathbf{1}_{t_k+s_l<1}\cdot\prod_{j=1}^{l}\omega_{\eta_j}(s_j).
\]
Here $\mathbf{1}_{\bullet}$ is $1$ when the condition $\bullet$ holds and $0$ otherwise.
When one index is empty, its chain of integration variables is omitted and its endpoint is taken to be $0$.
Thus, in the iterated integral expression, the connector is simply the indicator function $\mathbf{1}_{t_k+s_l<1}$.
This expression follows by expanding the two iterated integrals into power series and using
\[
nm\int_{\substack{t,s>0\\t+s<1}}t^{n-1}s^{m-1}\,\mathrm{d}t\,\mathrm{d}s=\frac{n!\,m!}{(n+m)!},
\]
where $n, m\geq 1$.
The connector for the connected iterated integral satisfies the following identities, from which the transport relations for the connected sums will follow:
\begin{equation}\label{eq:original_trans}
\int_t^1\frac{1}{1-x}\cdot\mathbf{1}_{x+s<1} \ \mathrm{d}x=\int_s^1\mathbf{1}_{t+y<1}\cdot\frac{1}{y} \ \mathrm{d}y, \qquad \int_t^1\frac{1}{x}\cdot\mathbf{1}_{x+s<1} \ \mathrm{d}x=\int_s^1\mathbf{1}_{t+y<1}\cdot\frac{1}{1-y} \ \mathrm{d}y.
\end{equation}
These identities hold for $0<t,s<1$.
The first also holds for $t=0$ and $0<s<1$, and the second for $s=0$ and $0<t<1$.
Both follow from the substitution $x=1-y$.

Let $\boldsymbol{k}=(k_1,\ldots,k_r)$ be an admissible index, and put $k\coloneqq k_1+\cdots+k_r$.
For the corresponding word
\[
w=BD(AC)^{k_1-1}\cdots BD(AC)^{k_r-1},
\]
the specialization $A=q^N$, $B=C=\infty$, $D=1$, with $0<q<1$, gives
\[
\lim_{N\to\infty}\lim_{q\nearrow 1}\left.(1-q)^kL_q(w)\right|_{A=q^N,\;B=C=\infty,\;D=1}=\zeta(\boldsymbol{k})\coloneqq\sum_{0<n_1<\cdots<n_r}\frac{1}{n_1^{k_1}\cdots n_r^{k_r}},
\]
where the right-hand side is the multiple zeta value associated with $\boldsymbol{k}$.
Here and below, $B=C=\infty$ means that the limit
$B,C\to\infty$ is taken first, with $N$ and $q$ fixed.

With this limiting picture in mind, we return to the setting of Seki--Yamamoto's connected sum.
We construct a discrete connector $\scrC_{w,w'}(n,m)$ (\cref{def:connector}) satisfying
\[
\lim_{q\nearrow 1}\left.\scrC_{w,w'}(n,m)\right|_{A=q^N,\;B=C=\infty,\;D=1}=\mathbf{1}_{n+m\leq N}.
\]
After rescaling the lattice to $[0,1]$ and letting $N\to\infty$, the condition $n+m\leq N$ gives the connector $\mathbf{1}_{s+t<1}$ in the connected iterated integral expression of $Z^{\mathrm{SY}}(\boldsymbol{k};\boldsymbol{l})$; the boundary $s+t=1$ has measure zero.

The corresponding connected sum $Z(w,w')$ (\cref{def:connected_sum}) recovers Seki--Yamamoto's connected sum in the same limit.
That is, for
\[
w=BD(AC)^{k_1-1}\cdots BD(AC)^{k_r-1},\qquad w'=BD(AC)^{l_1-1}\cdots BD(AC)^{l_s-1},
\]
we have
\[
\lim_{N\to\infty}\lim_{q\nearrow 1}\left.(1-q)^{k+l}Z(w,w')\right|_{A=q^N,\;B=C=\infty,\;D=1}=Z^{\mathrm{SY}}(\boldsymbol{k};\boldsymbol{l}).
\]
Recovering this limit alone, however, is not sufficient.
The essential requirement is to extend \eqref{eq:original_trans} to discrete identities valid for every $N\geq0$ and general parameters satisfying $A=q^ND$.
These identities must yield transport for all six choices of the moving letter, taking each letter to its image under $\tau$ (\cref{prop:local_transport_rel}).
They must also account for the word-dependent $q$-shifts of the poles.
Accordingly, our connector depends not only on the endpoints $n, m$ but also on the words $w, w'$.

The idea of using the connected sum method when a change of variables argument does not readily apply has already been employed in proofs of duality for $q$-multiple zeta values (see, e.g., \cite{Brindle,Yamamoto}).
However, the connector used here appears to be somewhat more complicated than those used previously.
\section{The connector}
For positive integers $n$ and $d$, we use the $q$-shifted factorials and the shorthand notation for their products given by
\[
(a;q)_n\coloneqq\prod_{j=0}^{n-1}(1-aq^j),\qquad (a_1,\ldots,a_d;q)_n\coloneqq \prod_{\nu=1}^{d}(a_\nu;q)_n,
\]
with $(a;q)_0\coloneqq1$.
Our convention for the basic hypergeometric series is
\[
{}_4\phi_3\!\left(
\begin{matrix}
a_1,a_2,a_3,a_4\\
b_1,b_2,b_3
\end{matrix};q,z
\right)
=\sum_{j=0}^{\infty}\frac{(a_1,a_2,a_3,a_4;q)_j}{(b_1,b_2,b_3,q;q)_j}z^j.
\]
For a pair of words $(w,w')$, set
\[
\beta_{w,w'}\coloneqq\frac{B}{A}q^{1+\#_{AB}(w)+\#_{AB}(w')},\qquad\gamma_{w,w'}\coloneqq\frac{C}{A}q^{1+\#_{AC}(w)+\#_{AC}(w')}.
\]
When the pair $(w,w')$ is fixed, we abbreviate $\beta=\beta_{w,w'}$ and $\gamma=\gamma_{w,w'}$.

We now define the connector used in this paper.
\begin{definition}[The connector]\label{def:connector}
Let $(w,w')$ be a pair of words.
For nonnegative integers $n$ and $m$ with $n+m\leq N$, we define the \emph{connector} $\scrC_{w,w'}(n,m)$ by
\begin{align*}
\scrC_{w,w'}(n, m)&\coloneqq q^{(n+\#_{\hat{A}}(w))(m+\#_{\hat{A}}(w'))}\frac{(\beta,\gamma;q)_{n+\#_{\hat{A}}(w)}(\beta,\gamma;q)_{m+\#_{\hat{A}}(w')}}{(\beta,\gamma;q)_{N+\#_{\hat{A}}(w)+\#_{\hat{A}}(w')}}\\
&\qquad\cdot\frac{(q^{n+\#_{\hat{A}}(w)+1},q^{m+\#_{\hat{A}}(w')+1},\beta\gamma q^{n+m-1+\#_{\hat{A}}(w)+\#_{\hat{A}}(w')};q)_{N-n-m}}{(q;q)_{N-n-m}}\\
&\qquad\cdot {}_4\phi_3\!\left(\begin{matrix}q^{-N+n+m},q,q/\beta,q/\gamma\\q^{n+\#_{\hat{A}}(w)+1},q^{m+\#_{\hat{A}}(w')+1},q^{2-N-\#_{\hat{A}}(w)-\#_{\hat{A}}(w')}/(\beta\gamma)\end{matrix};q,q\right).
\end{align*}
Set $\scrC_{w,w'}(n,m)\coloneqq0$ for nonnegative integers $n$, $m$ with $n+m>N$.
\end{definition}
The ${}_4\phi_3$ series above terminates at $j=N-n-m$, since $(q^{-N+n+m};q)_j=0$ for $j>N-n-m$.
The definition also immediately gives the symmetry
\[
\scrC_{w,w'}(n,m)=\scrC_{w',w}(m,n).
\]
For the remainder of this section, we fix a pair of words $(w,w')$ and nonnegative integers $n, m$.
We omit some calculations in the proofs of \cref{lem:Delta,lem:long}.
Although these involve only elementary manipulations, carrying them out in full by hand is somewhat laborious.
\begin{lemma}\label{lem:Delta}
Put
\[
\Delta\scrC_{w,w'}(n,m)\coloneqq\scrC_{w,w'}(n,m)-\scrC_{w,w'}(n+1,m).
\]
If $n+m\leq N$, then
\begin{align*}
\Delta\scrC_{w,w'}(n,m)&=q^{(n+\#_{\hat{A}}(w))(m+\#_{\hat{A}}(w'))}\frac{(\beta,\gamma;q)_{n+\#_{\hat{A}}(w)}(\beta,\gamma;q)_{m+\#_{\hat{A}}(w')}
}{(\beta,\gamma;q)_{N+\#_{\hat{A}}(w)+\#_{\hat{A}}(w')}}\\&\quad\cdot\frac{\bigl(q^{n+\#_{\hat{A}}(w)+1},q^{m+\#_{\hat{A}}(w')},\beta\gamma q^{n+m+\#_{\hat{A}}(w)+\#_{\hat{A}}(w')};q\bigr)_{N-n-m}}{(q;q)_{N-n-m}}.
\end{align*}
If $n+m>N$, then $\Delta\scrC_{w,w'}(n,m)=0$.
\end{lemma}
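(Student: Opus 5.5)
We argue directly from \cref{def:connector} by a termwise telescoping in the summation index of the ${}_4\phi_3$. Abbreviate $a\coloneqq n+\#_{\hat{A}}(w)$, $b\coloneqq m+\#_{\hat{A}}(w')$, $s\coloneqq\#_{\hat{A}}(w)+\#_{\hat{A}}(w')$ and $M\coloneqq N-n-m$, so that $a+b=n+m+s$.

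First we dispose of the boundary cases. If $n+m>N$, both $\scrC_{w,w'}(n,m)$ and $\scrC_{w,w'}(n+1,m)$ vanish by definition, so $\Delta\scrC_{w,w'}(n,m)=0$. If $n+m=N$, then $\scrC_{w,w'}(n+1,m)=0$. Moreover $M=0$, so the middle factor and the ${}_4\phi_3$ in $\scrC_{w,w'}(n,m)$ both equal $1$. This leaves exactly the prefactor $q^{ab}(\beta,\gamma;q)_a(\beta,\gamma;q)_b/(\beta,\gamma;q)_{N+s}$, which is the claimed right-hand side with $M=0$.

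So assume $M\ge1$. We expand $\scrC_{w,w'}(n,m)=P(a,b)\sum_{j=0}^{M}t_j(n)$, where
\[
P(a,b)=q^{ab}(\beta,\gamma;q)_a(\beta,\gamma;q)_b/(\beta,\gamma;q)_{N+s}.
\]
Here $t_j(n)$ is the $j$-th term of the ${}_4\phi_3$ multiplied by the middle factor
\[
(q^{a+1},q^{b+1},\beta\gamma q^{a+b-1};q)_M/(q;q)_M .
\]
We cancel $(q^{a+1};q)_j$ against $(q^{a+1};q)_M$, and similarly for $b$. We also write $(q^{-M};q)_j/(q;q)_M$ as $\pm q^{\binom{j}{2}-Mj}/\bigl((q;q)_{M-j}(q;q)_j\bigr)$. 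Then each $t_j(n)$ becomes a ratio of finite products, namely $(q^{a+1+j};q)_{M-j}(q^{b+1+j};q)_{M-j}$ times $(q/\beta,q/\gamma;q)_j(\beta\gamma q^{a+b-1};q)_M/(q^{2-N-s}/(\beta\gamma);q)_j$ over $(q;q)_{M-j}$, up to explicit powers of $q$. We check in passing that the ${}_4\phi_3$ is balanced: the product of the lower parameters equals $q$ times the product of the upper ones. This fact should be what makes the telescoping close up.

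Passing from $n$ to $n+1$ replaces $a$ by $a+1$ and $M$ by $M-1$, with $b$, $\beta$, $\gamma$ and $s$ unchanged. Hence
\[
P(a+1,b)=P(a,b)\,q^{b}(1-\beta q^{a})(1-\gamma q^{a}),
\]
and $t_j(n+1)$ is an explicit rational multiple of $t_j(n)$ for $j\le M-1$. The key step is then to find a sequence $T_j$, $0\le j\le M+1$, such that
\[
t_j(n)-q^{b}(1-\beta q^a)(1-\gamma q^a)\,t_j(n+1)=T_j-T_{j+1},\qquad T_{M+1}=0
\]
(with $t_M(n+1)\coloneqq0$), and such that
\[
T_0=(q^{a+1},q^{b},\beta\gamma q^{a+b};q)_M/(q;q)_M .
\]
Summing over $j$ then gives $\Delta\scrC_{w,w'}(n,m)=P(a,b)T_0$, which is the claim. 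Natural candidates are a product of the form $T_j=t_j(n)\cdot R_j$, with $R_j$ a simple rational function of $q^j$, or equivalently a Gosper-type certificate. To find it we would first verify the case $M=1$ by hand, then guess $R_j$ from the shape of the target $T_0$: the change $q^{b+1}\to q^{b}$ and $\beta\gamma q^{a+b-1}\to\beta\gamma q^{a+b}$ suggests $R_j$ involves $(1-q^{b+j})$ and $(1-\beta\gamma q^{a+b-1+j})$.

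This identification of $T_j$, equivalently a contiguous relation for balanced terminating ${}_4\phi_3$ series shifting one lower parameter and the termination parameter simultaneously, is the main obstacle. If guessing fails, we would instead invoke a known three-term contiguous relation for balanced ${}_4\phi_3$ (as in Gasper--Rahman) and reduce to it. Once $T_j$ is proposed, checking the telescoping identity is a routine verification: clear denominators and compare polynomials in $q^j$.
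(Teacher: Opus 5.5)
Your strategy is the paper's: a $q$-WZ telescoping in the summation index $j$, with a certificate $T_j=t_j(n)R_j$. Your reductions around it are correct. These are the boundary cases $n+m\ge N$, the identity $P(a+1,b)=P(a,b)\,q^b(1-\beta q^a)(1-\gamma q^a)$, the target value of $T_0$, and the balancing check.

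\textbf{The gap.} The proposal stops exactly at the step that carries the content. You never produce $R_j$ or verify the telescoping identity, so as written the lemma is not proved.

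\textbf{Your heuristic for $R_j$ points the wrong way.} Since you impose $T_{M+1}=0$, the relation forces $T_j=\sum_{i=j}^{M}\bigl(t_i(n)-q^b(1-\beta q^a)(1-\gamma q^a)t_i(n+1)\bigr)$. Hence $R_j$ is uniquely determined, and it is
\[
R_j=\frac{(1-q^{b+j})(1-\beta\gamma q^{a+b+M-1-j})}{(1-q^{b+M})(1-\beta\gamma q^{a+b-1})},\qquad a+b+M=N+s .
\]
This is exactly the paper's ratio $T_j(n,m)/S_j(n,m)$, written there with $1-q^{N-n+\#_{\hat A}(w')}$ and $1-\beta\gamma q^{N-1-j+s}$. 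The $\beta\gamma$-factor depends on $q^{-j}$, not on $q^{+j}$. Indeed, the shift $\beta\gamma q^{a+b-1}\to\beta\gamma q^{a+b}$ in $T_0$ contributes $(1-\beta\gamma q^{a+b-1+M})/(1-\beta\gamma q^{a+b-1})$. So at $j=0$ the numerator factor is $1-\beta\gamma q^{a+b-1+M}$, while $1-\beta\gamma q^{a+b-1}$ appears only as a $j$-independent normalization. A search built on the factor $1-\beta\gamma q^{a+b-1+j}$ that you suggest therefore cannot close up in general.

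\textbf{How to finish.} With the correct $R_j$ the verification is short.
\begin{enumerate}
\item Divide the required identity by $t_j(n)$, put $x=q^j$, and multiply by $(1-q^{b+M})(1-\beta\gamma q^{a+b-1})(1-q^{a+1}x)$.
\item The left side becomes linear in $x$. The right side becomes a Laurent polynomial with terms from $x^{-1}$ to $x^{2}$.
\item The $x^{-1}$ terms cancel identically. The $x^{2}$ terms cancel precisely because $a+b+M=N+s$, which is the balancing condition, so your expectation about where balancing enters is right.
\item The constant and linear coefficients then agree directly.
\end{enumerate}
Merely gesturing at a Gasper--Rahman contiguous relation does not replace this certificate.

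\textbf{A minor slip.} The correct identity is $(q^{-M};q)_j/(q;q)_M=(-1)^jq^{j(j-1)/2-Mj}/(q;q)_{M-j}$, without the extra $(q;q)_j$ in your denominator. Your subsequent description of $t_j(n)$ is consistent with the correct version.
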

\begin{proof}
First suppose $n+m<N$.
Let $S_j(n,m)$ denote the $j$th summand of $\scrC_{w,w'}(n,m)$, including factors preceding the series.
Thus $S_0(n,m)$ is precisely the product of those factors, and
\[
S_j(n,m)=S_0(n,m)\frac{(q^{-N+n+m},q/\beta,q/\gamma;q)_j}{(q^{n+\#_{\hat{A}}(w)+1},q^{m+\#_{\hat{A}}(w')+1},q^{2-N-\#_{\hat{A}}(w)-\#_{\hat{A}}(w')}/(\beta\gamma);q)_j}q^j.
\]
Using this, we obtain
\[
\frac{S_j(n+1,m)}{S_j(n,m)}=\frac{q^{m+\#_{\hat{A}}(w')}(q^j-q^{N-n-m})}{(1-q^{n+j+1+\#_{\hat{A}}(w)})(1-q^{N-n+\#_{\hat{A}}(w')})}\cdot\frac{(1-\beta q^{n+\#_{\hat{A}}(w)})(1-\gamma q^{n+\#_{\hat{A}}(w)})}{1-\beta\gamma q^{n+m-1+\#_{\hat{A}}(w)+\#_{\hat{A}}(w')}}
\]
and
\[
\frac{S_{j+1}(n,m)}{S_j(n,m)}=\frac{q(1-q^{-N+n+m+j})}{(1-q^{n+j+1+\#_{\hat{A}}(w)})(1-q^{m+j+1+\#_{\hat{A}}(w')})}\cdot\frac{(1-q^{j+1}/\beta)(1-q^{j+1}/\gamma)}{1-q^{j+2-N-\#_{\hat{A}}(w)-\#_{\hat{A}}(w')}/(\beta\gamma)}
\]
for $0\leq j\leq N-n-m$.
Note that although $1-q^{N-n+\#_{\hat{A}}(w')}$ may vanish when $n+m=N$,
we are assuming $n+m<N$ here.
For any $j\geq 0$, define
\[
T_j(n,m)\coloneqq S_j(n,m)\frac{(1-q^{m+j+\#_{\hat{A}}(w')})(1-\beta\gamma q^{N-1-j+\#_{\hat{A}}(w)+\#_{\hat{A}}(w')})}{(1-q^{N-n+\#_{\hat{A}}(w')})(1-\beta\gamma q^{n+m-1+\#_{\hat{A}}(w)+\#_{\hat{A}}(w')})}.
\]
Using the two ratio formulas above, we verify the following $q$-WZ relation:
\[
S_j(n,m)-S_j(n+1,m)=T_j(n,m)-T_{j+1}(n,m).
\]
Since $S_j(n,m)=0$ for $j>N-n-m$, we also have $T_{N-n-m+1}(n,m)=0$.
Summing the $q$-WZ relation over $j=0,\ldots,N-n-m$ gives
\[
\Delta\scrC_{w,w'}(n,m)=T_0(n,m)=S_0(n,m)\frac{(1-q^{m+\#_{\hat{A}}(w')})(1-\beta\gamma q^{N-1+\#_{\hat{A}}(w)+\#_{\hat{A}}(w')})}{(1-q^{N-n+\#_{\hat{A}}(w')})(1-\beta\gamma q^{n+m-1+\#_{\hat{A}}(w)+\#_{\hat{A}}(w')})}.
\]
Since 
\[
(q^{m+\#_{\hat{A}}(w')+1};q)_{N-n-m}\cdot\frac{1-q^{m+\#_{\hat{A}}(w')}}{1-q^{N-n+\#_{\hat{A}}(w')}}=(q^{m+\#_{\hat A}(w')};q)_{N-n-m}
\]
and 
\begin{align*}
&(\beta\gamma q^{n+m-1+\#_{\hat{A}}(w)+\#_{\hat{A}}(w')};q)_{N-n-m}\cdot\frac{1-\beta\gamma q^{N-1+\#_{\hat{A}}(w)+\#_{\hat{A}}(w')}}{1-\beta\gamma q^{n+m-1+\#_{\hat{A}}(w)+\#_{\hat{A}}(w')}}\\
&=(\beta\gamma q^{n+m+\#_{\hat{A}}(w)+\#_{\hat{A}}(w')};q)_{N-n-m}
\end{align*}
hold, we obtain the desired formula.
For $n+m\geq N$, the assertion follows immediately from the definition.
\end{proof}
We define $A^{\circ}, B^{\circ}, C^{\circ}, D^{\circ}$, $A^{\bullet}, B^{\bullet}, C^{\bullet}, D^{\bullet}$, which depend on $w$ and $w'$ (but not on the moving letter $uv$ introduced below), by
\[
\begin{alignedat}{2}
A^\circ&\coloneqq Aq^{\#_{\hat A}(w)},
&\qquad
A^\bullet&\coloneqq Aq^{\#_{\hat A}(w')},\\
B^\circ&\coloneqq Bq^{1+\#_{\hat A}(w)+\#_{AB}(w)+\#_{AB}(w')},
&\qquad
B^\bullet&\coloneqq Bq^{1+\#_{\hat A}(w')+\#_{AB}(w)+\#_{AB}(w')},\\
C^\circ&\coloneqq Cq^{1+\#_{\hat A}(w)+\#_{AC}(w)+\#_{AC}(w')},
&\qquad
C^\bullet&\coloneqq Cq^{1+\#_{\hat A}(w')+\#_{AC}(w)+\#_{AC}(w')},\\
D^\circ&\coloneqq Dq^{-\#_{\hat A}(w')},
&\qquad
D^\bullet &\coloneqq Dq^{-\#_{\hat A}(w)}.
\end{alignedat}
\]
A pair $(w,w')$ is \emph{admissible} if either both words are nonempty and each starts with $BC$, $BD$, or $CD$, or one word is empty and the other is admissible.
This is equivalent to the condition that $w\tau(w')$ is an admissible word.
\begin{lemma}\label{lem:long}
Let $uv\in\{AB,AC,AD,BC,BD,CD\}$, write $\tau(uv)=\tilde{u}\tilde{v}$, and assume that $(wuv,w')$ is an admissible pair.
For nonnegative integers $i,j$ with $i+j<N$, we have
\begin{equation}\label{eq:double_difference}
\begin{split}
&\left(\frac{Aq^{-i}}{Aq^{-i}-u^{\circ}}-\frac{Aq^{-i}}{Aq^{-i}-v^{\circ}}\right)^{-1}\Delta\scrC_{w,w'\tau(uv)}(i,j)\\
&\quad-\left(\frac{Aq^{-i-1}}{Aq^{-i-1}-u^{\circ}}-\frac{Aq^{-i-1}}{Aq^{-i-1}-v^{\circ}}\right)^{-1}\Delta\scrC_{w,w'\tau(uv)}(i+1,j)\\
&=\left(\frac{Aq^{-j}}{Aq^{-j}-\tilde{u}^{\bullet}}-\frac{Aq^{-j}}{Aq^{-j}-\tilde{v}^{\bullet}}\right)^{-1}\left(\Delta\scrC_{wuv,w'}(i,j)-\Delta\scrC_{wuv,w'}(i,j+1)\right).
\end{split}
\end{equation}
If $i+j=N$, we have instead the boundary identity
\[
\left(\frac{Aq^{-i}}{Aq^{-i}-u^\circ}-\frac{Aq^{-i}}{Aq^{-i}-v^\circ}
\right)^{-1}\Delta\scrC_{w,w'\tau(uv)}(i,j)=\left(\frac{Aq^{-j}}{Aq^{-j}-\tilde{u}^\bullet}-\frac{Aq^{-j}}{Aq^{-j}-\tilde{v}^\bullet}\right)^{-1}\Delta\scrC_{wuv,w'}(i,j).
\]
\end{lemma}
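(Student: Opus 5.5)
Everything in the statement is an explicit closed form, so the plan is to substitute the product formula of \cref{lem:Delta} and reduce each identity to a rational-function identity, checked case by case over the six moving letters $uv$.

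\textbf{Parameters for the two pairs.} Write $a=\#_{\hat A}(w)$ and $b=\#_{\hat A}(w')$. Appending $uv$ to $w$ (resp.\ prepending... rather, appending $\tau(uv)$ to $w'$) changes $a$ (resp.\ $b$) by $\epsilon\coloneqq[uv\neq AD]$. The involution $\tau$ swaps $AB\leftrightarrow CD$ and $AC\leftrightarrow BD$, so it changes the $A$-free count in the same way. The parameters $\beta,\gamma$ shift by one factor of $q$ when $uv$ or $\tau(uv)$ is $AB$ (resp.\ $AC$). Since $\tau$ does not send $CD$ to $AB$ on the same side, a short check is needed: the pairs $(w,w'\tau(uv))$ and $(wuv,w')$ have equal $\beta$ exactly when $\#_{AB}(uv)=\#_{AB}(\tau(uv))$. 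This holds for $uv\in\{AD,BC\}$. For $uv=AB$ we get $\tau(uv)=CD$, so $\beta$ is multiplied by $q$ on the left side only; for $uv=CD$ it is multiplied by $q$ on the right side only. Analogous statements hold for $\gamma$ with $AC,BD$.

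\textbf{Reduction to ratios.} Both sides of \eqref{eq:double_difference} are products of $q$-Pochhammer symbols in $i,j$ times simple rational factors. The reciprocal of a letter factor is
\[
\left(\frac{Aq^{-i}}{Aq^{-i}-u}-\frac{Aq^{-i}}{Aq^{-i}-v}\right)^{-1}=\frac{(Aq^{-i}-u)(Aq^{-i}-v)}{Aq^{-i}(u-v)},
\]
which is a quadratic-over-monomial expression in $q^{-i}$. I would therefore divide \eqref{eq:double_difference} by $\Delta\scrC_{w,w'\tau(uv)}(i,j)$. The ratios $\Delta\scrC(i+1,j)/\Delta\scrC(i,j)$ and $\Delta\scrC(i,j+1)/\Delta\scrC(i,j)$ are then read off from \cref{lem:Delta} as explicit rational functions of $q^i$ and $q^j$. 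The ratio $\Delta\scrC_{wuv,w'}(i,j)/\Delta\scrC_{w,w'\tau(uv)}(i,j)$ is also rational: its counts differ by $\epsilon$, and its $\beta,\gamma$ differ by at most a factor $q$, so the Pochhammer symbols telescope to a few linear factors.

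\textbf{Using $A=q^ND$ and the case split.} The identity becomes a polynomial identity in $X=q^{-i}$ and $Y=q^{-j}$, with coefficients in $\bbQ(q,A,B,C)$. I would use $A=q^ND$ to express $D^\circ=Aq^{-N-b}$ and $D^\bullet=Aq^{-N-a}$. This is exactly where the factors $(1-q^{N-i-j+\dots})$ produced by the length-$(N-i-j)$ Pochhammer symbols are matched against the pole factors $Aq^{-i}-D^\circ$. The $B,C$ poles, in turn, match the factors $(1-\beta q^{i+a})$ and $(1-\gamma q^{i+a})$, since $B^\circ=A\beta q^{a}$, and so on. I then verify the six cases separately. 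In the case $uv=AD$ there is no $q$-shift and both letter factors are explicit; in the remaining cases one pole factor cancels a Pochhammer endpoint factor, and a quadratic identity remains to be checked.

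\textbf{Boundary identity and the main obstacle.} For the boundary case $i+j=N$, the Pochhammer symbols of length $N-i-j$ are empty. Moreover, by \cref{lem:Delta}, $\Delta\scrC(i+1,j)=\Delta\scrC(i,j+1)=0$ there, so the identity reduces to a single ratio check by the same substitution. Admissibility of $(wuv,w')$ is used only to guarantee that none of the letter factors vanish, so that the divisions are legitimate. I expect the main obstacle to be the bookkeeping in the four asymmetric cases $AB,AC,BD,CD$, where $\beta$ or $\gamma$ differs between the two sides; there the exact power of $q$ must line up. If a direct check gets messy, I would first use $\tau$-symmetry together with $\scrC_{w,w'}(n,m)=\scrC_{w',w}(m,n)$ to pair $AB$ with $CD$ and $AC$ with $BD$, which halves the work.
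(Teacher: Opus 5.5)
Your strategy is the paper's: substitute the closed form of \cref{lem:Delta}, divide \eqref{eq:double_difference} by $\Delta\scrC_{w,w'\tau(uv)}(i,j)$, read off the three resulting ratios, and check a rational identity, with the boundary case reducing to a single ratio. The paper adds one organizing step you lack. It sets $A^+=Aq^{\#_{\hat A}(wuv)}$, $B^+=A^+\beta_{wuv,w'}$, $C^+=A^+\gamma_{wuv,w'}$, $D^+=D^\circ$ and $\theta=A^+D^+/(B^+C^+)$. In these terms the three ratios have the same shape for all six letters. The six cases then collapse to one polynomial identity in $x=q^{N-i-j}$, $t=Aq^{-i}$, whose only input is $\theta B^+C^+=A^+D^+$. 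Six separate checks would also work, but only with correct bookkeeping.

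Your bookkeeping is wrong exactly in the four cases you flag as hard. $\#_{\hat A}$ counts letters \emph{not containing} $A$, so $\#_{\hat A}(uv)=1$ exactly for $uv\in\{BC,BD,CD\}$. It is not preserved by $\tau$: $\#_{\hat A}(\tau(uv))=1$ exactly for $uv\in\{AB,AC,BC\}$. So $\epsilon=[uv\neq AD]$ on both sides is false.
\begin{itemize}
\item For $uv=AB$, the pair $(wAB,w')$ has $\#_{\hat A}$-counts $(a,b)$ and parameter $q\beta_{w,w'}$, while $(w,w'CD)$ has counts $(a,b+1)$ and parameter $\beta_{w,w'}$.
\item For $uv=CD$, it is counts $(a+1,b)$ with $\beta_{w,w'}$ versus $(a,b)$ with $q\beta_{w,w'}$.
\item $AC$ and $BD$ behave the same way with $\gamma$ in place of $\beta$.
\end{itemize}
With your shifts, the ratio $\Delta\scrC_{wuv,w'}(i,j)/\Delta\scrC_{w,w'\tau(uv)}(i,j)$ is miscomputed and the identity will not close.

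Two further points.
\begin{itemize}
\item Dividing by $\Delta\scrC_{w,w'\tau(uv)}(i,j)$ requires it to be nonzero, which is a second use of admissibility that you omit. Admissibility forces $\#_{\hat A}(w'\tau(uv))\geq 1$: if $w'=1$ then $uv\in\{AB,AC,BC\}$, and otherwise $w'$ starts with an $A$-free letter. Without this, $(q^{j+\#_{\hat A}(w'\tau(uv))};q)_{N-i-j}$ vanishes at $j=0$.
\item Your halving trick via $\scrC_{w,w'}(n,m)=\scrC_{w',w}(m,n)$ does not apply to \eqref{eq:double_difference} as stated. The swap turns the first-argument difference $\Delta$ into a second-argument difference. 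So the swapped $AB$ identity puts the single differences on $\scrC_{w'CD,w}$, whereas the $CD$ instance of the lemma puts them on $\scrC_{w',wAB}$; these are different identities.
\end{itemize}
The trick does work for the undifferenced identity
\[
\phi(i)^{-1}\Delta\scrC_{w,w'\tau(uv)}(i,j)=\psi(j)^{-1}\bigl(\scrC_{wuv,w'}(i,j)-\scrC_{wuv,w'}(i,j+1)\bigr),
\]
with $\phi,\psi$ the two moving factors. Its difference in $i$ is \eqref{eq:double_difference}. It is a single ratio check via \cref{lem:Delta} applied to $\scrC_{w',wuv}(j,i)$.
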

\begin{proof}
We introduce the following quantities, which may depend on the moving letter $uv$ as well as on $w$ and $w'$:
\begin{align*}
A^+&\coloneqq A^{\circ}q^{\#_{\hat{A}}(uv)}=Aq^{\#_{\hat{A}}(wuv)},\\
B^+&\coloneqq B^{\circ}q^{\#_{\hat{A}}(uv)+\#_{AB}(uv)}=Aq^{\#_{\hat{A}}(wuv)}\beta_{wuv,w'},\\
C^+&\coloneqq C^{\circ}q^{\#_{\hat{A}}(uv)+\#_{AC}(uv)}=Aq^{\#_{\hat{A}}(wuv)}\gamma_{wuv,w'},\\
D^+&\coloneqq D^{\circ},
\end{align*}
and $\theta\coloneqq\frac{A^+D^+}{B^+C^+}=q^{\#_{AD}(uv)-1}\frac{A^{\circ}D^{\circ}}{B^{\circ}C^{\circ}}$.
We first consider the case $i+j<N$.
Applying \cref{lem:Delta} separately to the pairs $(w,w'\tau(uv))$ and $(wuv,w')$, and simplifying the resulting $q$-shifted factorials, gives the following three identities.
Since $(w,w'\tau(uv))$ is admissible and $w'\tau(uv)$ is nonempty, we have $\#_{\hat A}(w'\tau(uv))\geq1$.
Thus \cref{lem:Delta} and our algebraic independence assumptions imply that $\Delta\scrC_{w,w'\tau(uv)}(i,j)\neq0$.
The moving factors are also well defined and nonzero.
\begin{align*}
&\frac{(Aq^{-i}-qu^{\circ})(Aq^{-i}-qv^{\circ})}{q(Aq^{-i}-u^{\circ})(Aq^{-i}-v^{\circ})}\cdot\frac{\Delta\scrC_{w,w'\tau(uv)}(i+1,j)}{\Delta\scrC_{w,w'\tau(uv)}(i,j)}\\
&=\frac{\theta(q^{N-i-j}-1)(Aq^{-i}-B^+)(Aq^{-i}-C^+)}{(\theta q^{N-i-j}-1)(Aq^{-i}-A^+)(Aq^{-i}-D^+)},
\end{align*}
\begin{align*}
&q^{j-i}\cdot\frac{u^{\circ}-v^{\circ}}{\tilde{u}^{\bullet}-\tilde{v}^{\bullet}}\cdot\frac{(Aq^{-j}-\tilde{u}^{\bullet})(Aq^{-j}-\tilde{v}^{\bullet})}{(Aq^{-i}-u^{\circ})(Aq^{-i}-v^{\circ})}\cdot\frac{\Delta\scrC_{wuv,w'}(i,j)}{\Delta\scrC_{w,w'\tau(uv)}(i,j)}\\
&=\frac{(Aq^{-i}-A^+q^{N-i-j})(Aq^{-i}-D^+q^{N-i-j})}{q^{N-i-j}(Aq^{-i}-A^+)(Aq^{-i}-D^+)},
\end{align*}
and
\begin{align*}
&q^{j-i}\cdot\frac{u^{\circ}-v^{\circ}}{\tilde{u}^{\bullet}-\tilde{v}^{\bullet}}\cdot\frac{(Aq^{-j}-\tilde{u}^{\bullet})(Aq^{-j}-\tilde{v}^{\bullet})}{(Aq^{-i}-u^{\circ})(Aq^{-i}-v^{\circ})}\cdot\frac{\Delta\scrC_{wuv,w'}(i,j+1)}{\Delta\scrC_{w,w'\tau(uv)}(i,j)}\\
&=\frac{(q^{N-i-j}-1)(Aq^{-i}-\theta B^+q^{N-i-j})(Aq^{-i}-\theta C^+q^{N-i-j})}{q^{N-i-j}(\theta q^{N-i-j}-1)(Aq^{-i}-A^+)(Aq^{-i}-D^+)}.
\end{align*}
Each identity follows by elementary cancellation of $q$-shifted factorials in \cref{lem:Delta}.
Although the parameter shifts depend on the moving letter, the resulting identities have the same form for all six choices.
For illustration, we verify the second identity in the case $uv=AB$.
In this case, $\tau(AB)=CD$.
Writing $\beta=\beta_{w,w'}$ and $\gamma=\gamma_{w,w'}$, we obtain from \cref{lem:Delta}
\begin{align*}
\Delta\scrC_{wAB,w'}(i,j)&=q^{(i+\#_{\hat{A}}(w))(j+\#_{\hat{A}}(w'))}\frac{(\beta q,\gamma;q)_{i+\#_{\hat{A}}(w)}(\beta q,\gamma;q)_{j+\#_{\hat{A}}(w')}
}{(\beta q,\gamma;q)_{N+\#_{\hat{A}}(w)+\#_{\hat{A}}(w')}}\\&\quad\cdot\frac{\bigl(q^{i+\#_{\hat{A}}(w)+1},q^{j+\#_{\hat{A}}(w')},\beta\gamma q^{i+j+1+\#_{\hat{A}}(w)+\#_{\hat{A}}(w')};q\bigr)_{N-i-j}}{(q;q)_{N-i-j}}
\end{align*}
and
\begin{align*}
\Delta\scrC_{w,w'CD}(i,j)&=q^{(i+\#_{\hat{A}}(w))(j+\#_{\hat{A}}(w')+1)}\frac{(\beta,\gamma;q)_{i+\#_{\hat{A}}(w)}(\beta,\gamma;q)_{j+\#_{\hat{A}}(w')+1}
}{(\beta,\gamma;q)_{N+\#_{\hat{A}}(w)+\#_{\hat{A}}(w')+1}}\\&\quad\cdot\frac{\bigl(q^{i+\#_{\hat{A}}(w)+1},q^{j+\#_{\hat{A}}(w')+1},\beta\gamma q^{i+j+\#_{\hat{A}}(w)+\#_{\hat{A}}(w')+1};q\bigr)_{N-i-j}}{(q;q)_{N-i-j}}.
\end{align*}
Dividing these expressions and cancelling the common factors gives
\[
\frac{\Delta\scrC_{wAB,w'}(i,j)}{\Delta\scrC_{w,w'CD}(i,j)}=q^{-i-\#_{\hat{A}}(w)}\cdot\frac{(1-\beta q^{i+\#_{\hat{A}}(w)})(1-\gamma q^{N+\#_{\hat{A}}(w)+\#_{\hat{A}}(w')})}{(1-\beta)(1-\gamma q^{j+\#_{\hat{A}}(w')})}\cdot\frac{1-q^{j+\#_{\hat{A}}(w')}}{1-q^{N-i+\#_{\hat{A}}(w')}}.
\]
Using
\[
\beta=\frac{B^{\circ}}{{A^{\circ}}}, \quad \gamma=\frac{C^{\circ}}{A^{\circ}}, \quad \theta=\frac{A^{\circ}D^{\circ}}{qB^{\circ}C^{\circ}}
\]
together with
\[
q^{i+\#_{\hat{A}}(w)}=\frac{A^{\circ}}{Aq^{-i}}, \quad q^{j+\#_{\hat{A}}(w')}=\frac{Aq^{-i}}{D^{\circ}q^{N-i-j}}, \quad q^{N+\#_{\hat{A}}(w)+\#_{\hat{A}}(w')}=\frac{A^{\circ}}{D^{\circ}},
\]
we can rewrite this ratio as
\[
\frac{\Delta\scrC_{wAB,w'}(i,j)}{\Delta\scrC_{w,w'CD}(i,j)}=\frac{A^{\circ}(C^{\circ}-D^{\circ})(Aq^{-i}-B^{\circ})(Aq^{-i}-D^{\circ}q^{N-i-j})}{C^{\circ}(A^{\circ}-B^{\circ})(Aq^{-i}-D^{\circ})(Aq^{-i}-qB^{\circ}\theta q^{N-i-j})}.
\]
For any moving letter $uv$,
\begin{align*}
&q^{j-i}\cdot\frac{u^{\circ}-v^{\circ}}{\tilde{u}^{\bullet}-\tilde{v}^{\bullet}}\cdot\frac{(Aq^{-j}-\tilde{u}^{\bullet})(Aq^{-j}-\tilde{v}^{\bullet})}{(Aq^{-i}-u^{\circ})(Aq^{-i}-v^{\circ})}\\
&=\frac{\tilde{u}^{\bullet}\tilde{v}^{\bullet}(u^{\circ}-v^{\circ})}{AD(\tilde{u}^{\bullet}-\tilde{v}^{\bullet})}\cdot\frac{(Aq^{-i}-ADq^{N-i-j}/\tilde{u}^{\bullet})(Aq^{-i}-ADq^{N-i-j}/\tilde{v}^{\bullet})}{q^{N-i-j}(Aq^{-i}-u^{\circ})(Aq^{-i}-v^{\circ})}
\end{align*}
holds.
In the present case, we have
\[
\frac{AD}{C^{\bullet}}=qB^{\circ}\theta, \quad \frac{AD}{D^{\bullet}}=A^{\circ}.
\]
Consequently,
\[
q^{j-i}\cdot\frac{A^{\circ}-B^{\circ}}{C^{\bullet}-D^{\bullet}}\cdot\frac{(Aq^{-j}-C^{\bullet})(Aq^{-j}-D^{\bullet})}{(Aq^{-i}-A^{\circ})(Aq^{-i}-B^{\circ})}=\frac{(A^{\circ}-B^{\circ})(Aq^{-i}-A^{\circ}q^{N-i-j})(Aq^{-i}-qB^{\circ}\theta q^{N-i-j})}{q^{N-i-j}(A^{\circ}-qB^{\circ}\theta)(Aq^{-i}-A^{\circ})(Aq^{-i}-B^{\circ})}.
\]
Multiplying the two ratios, we obtain
\[
q^{j-i}\cdot\frac{A^{\circ}-B^{\circ}}{C^{\bullet}-D^{\bullet}}\cdot\frac{(Aq^{-j}-C^{\bullet})(Aq^{-j}-D^{\bullet})}{(Aq^{-i}-A^{\circ})(Aq^{-i}-B^{\circ})}\cdot\frac{\Delta\scrC_{wAB,w'}(i,j)}{\Delta\scrC_{w,w'CD}(i,j)}=\frac{(Aq^{-i}-A^{\circ}q^{N-i-j})(Aq^{-i}-D^{\circ}q^{N-i-j})}{q^{N-i-j}(Aq^{-i}-A^{\circ})(Aq^{-i}-D^{\circ})}.
\]
Since $A^+=A^\circ$ and $D^+=D^\circ$, this is precisely the second identity for $uv=AB$.

We now return to an arbitrary moving letter $uv$.
Dividing \eqref{eq:double_difference} by
\[
\left(\frac{Aq^{-i}}{Aq^{-i}-u^\circ}-\frac{Aq^{-i}}{Aq^{-i}-v^\circ}\right)^{-1}\Delta\scrC_{w,w'\tau(uv)}(i,j),
\]
we see that it is equivalent to
\begin{align*}
&1-\frac{(Aq^{-i}-qu^{\circ})(Aq^{-i}-qv^{\circ})}{q(Aq^{-i}-u^{\circ})(Aq^{-i}-v^{\circ})}\cdot\frac{\Delta\scrC_{w,w'\tau(uv)}(i+1,j)}{\Delta\scrC_{w,w'\tau(uv)}(i,j)}\\
&=q^{j-i}\frac{u^{\circ}-v^{\circ}}{\tilde{u}^{\bullet}-\tilde{v}^{\bullet}}\cdot\frac{(Aq^{-j}-\tilde{u}^{\bullet})(Aq^{-j}-\tilde{v}^{\bullet})}{(Aq^{-i}-u^{\circ})(Aq^{-i}-v^{\circ})}\left(\frac{\Delta\scrC_{wuv,w'}(i,j)}{\Delta\scrC_{w,w'\tau(uv)}(i,j)}-\frac{\Delta\scrC_{wuv,w'}(i,j+1)}{\Delta\scrC_{w,w'\tau(uv)}(i,j)}\right).
\end{align*}
Substituting the three identities preceding the example, it remains to prove
\begin{align*}
&1-\frac{\theta(q^{N-i-j}-1)(Aq^{-i}-B^+)(Aq^{-i}-C^+)}{(\theta q^{N-i-j}-1)(Aq^{-i}-A^+)(Aq^{-i}-D^+)}\\
&=\frac{(Aq^{-i}-A^+q^{N-i-j})(Aq^{-i}-D^+q^{N-i-j})}{q^{N-i-j}(Aq^{-i}-A^+)(Aq^{-i}-D^+)}\\
&\qquad -\frac{(q^{N-i-j}-1)(Aq^{-i}-\theta B^+q^{N-i-j})(Aq^{-i}-\theta C^+q^{N-i-j})}{q^{N-i-j}(\theta q^{N-i-j}-1)(Aq^{-i}-A^+)(Aq^{-i}-D^+)}.
\end{align*}
Put $x\coloneqq q^{N-i-j}$, $t\coloneqq Aq^{-i}$.
After multiplying by $x(t-A^+)(t-D^+)$ and rearranging, the last identity is equivalent to
\begin{align*}
&x(t-A^+)(t-D^+)-(t-A^+x)(t-D^+x)\\
&=\frac{x-1}{\theta x-1}\cdot\left(\theta x(t-B^+)(t-C^+)-(t-\theta B^+x)(t-\theta C^+x)\right).
\end{align*}
The left-hand side satisfies
\[
x(t-A^+)(t-D^+)-(t-A^+x)(t-D^+x)=(x-1)(t^2-A^+D^+x),
\]
whereas, since $\theta B^+C^+=A^+D^+$,
\[
\theta x(t-B^+)(t-C^+)-(t-\theta B^+x)(t-\theta C^+x)=(\theta x-1)(t^2-A^+D^+x).
\]
This proves \eqref{eq:double_difference} when $i+j<N$.

It remains to consider the boundary case $i+j=N$.
In this case, substituting the formula of \cref{lem:Delta} directly and cancelling the common factors gives
\[
q^{j-i}\cdot\frac{u^\circ-v^\circ}{\tilde{u}^\bullet-\tilde{v}^\bullet}\cdot\frac{(Aq^{-j}-\tilde{u}^\bullet)(Aq^{-j}-\tilde{v}^\bullet)}{(Aq^{-i}-u^\circ)(Aq^{-i}-v^\circ)}\cdot\frac{\Delta\scrC_{wuv,w'}(i,j)}{\Delta\scrC_{w,w'\tau(uv)}(i,j)}=1.
\]
This is equivalent to the required boundary identity.
\end{proof}
\begin{proposition}\label{prop:local_transport_rel}
Let $uv\in\{AB,AC,AD,BC,BD,CD\}$, write $\tau(uv)=\tilde{u}\tilde{v}$, and assume that $(wuv,w')$ is an admissible pair.
If $n+m\leq N$, then
\begin{align*}
&\sum_{a=n}^{N-m}\left(\frac{Aq^{-a}}{Aq^{-a}-u^{\circ}}-\frac{Aq^{-a}}{Aq^{-a}-v^{\circ}}\right)\scrC_{wuv,w'}(a,m)\\
&=\sum_{b=m}^{N-n}\scrC_{w,w'\tau(uv)}(n,b)\left(\frac{Aq^{-b}}{Aq^{-b}-\tilde{u}^{\bullet}}-\frac{Aq^{-b}}{Aq^{-b}-\tilde{v}^{\bullet}}\right).
\end{align*}
\end{proposition}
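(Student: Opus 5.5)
The plan is to prove the identity by \emph{descending induction} on $n+m$, from $n+m=N$ down to $0$, using finite differences in $n$ and $m$ so that \cref{lem:long} supplies the inductive step. Write $f(a)$ for the moving factor $\frac{Aq^{-a}}{Aq^{-a}-u^\circ}-\frac{Aq^{-a}}{Aq^{-a}-v^\circ}$ and $g(b)$ for $\frac{Aq^{-b}}{Aq^{-b}-\tilde u^\bullet}-\frac{Aq^{-b}}{Aq^{-b}-\tilde v^\bullet}$. Let $L(n,m)$ and $R(n,m)$ be the two sides, and put $D(n,m)\coloneqq R(n,m)-L(n,m)$, extended by $0$ when $n+m>N$. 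Since $\scrC_{\bullet,\bullet}(n,m)=0$ for $n+m>N$, we have $\Delta\scrC(n,N-n)=\scrC(n,N-n)$, which will handle the boundary terms.

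\emph{Base case $n+m=N$.} Each side has a single term, so the claim reads $f(n)\scrC_{wuv,w'}(n,m)=\scrC_{w,w'\tau(uv)}(n,m)\,g(m)$. Replacing both connectors by their $\Delta$-versions, this is exactly the boundary identity in \cref{lem:long} with $(i,j)=(n,m)$, after clearing the inverted moving factors. Those factors are nonzero by admissibility and algebraic independence.

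\emph{Inductive step.} I will compute the first differences of $L$ and $R$ in $n$:
\[
L(n,m)-L(n+1,m)=f(n)\,\scrC_{wuv,w'}(n,m),\qquad R(n,m)-R(n+1,m)=\sum_{b=m}^{N-n}\Delta\scrC_{w,w'\tau(uv)}(n,b)\,g(b),
\]
where the top term $b=N-n$ of the second sum comes from $\scrC_{w,w'\tau(uv)}(n,N-n)=\Delta\scrC_{w,w'\tau(uv)}(n,N-n)$. Given the induction hypothesis at $(n+1,m)$, the case $(n,m)$ reduces to the single-row identity
\[
f(n)\,\scrC_{wuv,w'}(n,m)=\sum_{b=m}^{N-n}\Delta\scrC_{w,w'\tau(uv)}(n,b)\,g(b).
\]
To prove this, I will take its difference in $m$. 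The left side gives $f(n)\bigl(\scrC_{wuv,w'}(n,m)-\scrC_{wuv,w'}(n,m+1)\bigr)$ and the right side gives $\Delta\scrC_{w,w'\tau(uv)}(n,m)g(m)$. The row identity then holds if these two differences agree for all $m$ and the identity holds at $m=N-n$. The latter is again the boundary identity.

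The remaining difference identity is handled by a second telescoping, now in $n$. Expand $\scrC_{wuv,w'}(n,m)-\scrC_{wuv,w'}(n,m+1)=\sum_{i\ge n}\bigl(\Delta\scrC_{wuv,w'}(i,m)-\Delta\scrC_{wuv,w'}(i,m+1)\bigr)$. By \eqref{eq:double_difference}, each summand equals $g(m)$ times
\[
f(i)^{-1}\Delta\scrC_{w,w'\tau(uv)}(i,m)-f(i+1)^{-1}\Delta\scrC_{w,w'\tau(uv)}(i+1,m).
\]
The boundary identity closes the sum at $i=N-m$.

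\emph{Main obstacle.} I expect the difficulty to lie in matching the telescoping weights. After telescoping in $i$, the factor left in front is $f(i)^{-1}$ rather than the constant $f(n)$, so the two sides above do not literally coincide term by term. I expect it will be necessary to organise the argument with the normalised quantities $H(i,j)\coloneqq f(i)^{-1}\Delta\scrC_{w,w'\tau(uv)}(i,j)$ and $K(i,j)\coloneqq g(j)^{-1}\Delta\scrC_{wuv,w'}(i,j)$. In these terms \cref{lem:long} says exactly that the discrete $1$-form $(H,K)$ is closed on the triangle $i+j\le N$, with $H=K$ on the hypotenuse. I will then write both sides of the proposition as the same double sum $\sum_{n\le i,\ m\le j,\ i+j\le N} f(i)g(j)H(i,j)$ over the region between $(n,m)$ and the hypotenuse. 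The left side is obtained by summing over $j$ first using $K$, and the right side by summing over $i$ first using $H$. The substantive work is checking that the summation-by-parts boundary terms vanish or cancel via the boundary identity, with the weights $f(i)g(j)$ kept inside the sums throughout.
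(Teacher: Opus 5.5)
Your proof is correct. It uses the same ingredients as the paper, but performs the two telescopings in the opposite order. Both arguments reduce the proposition, by summing over $a$ (your descending induction in $n$), to the row identity $f(n)\scrC_{wuv,w'}(n,m)=\sum_{b=m}^{N-n}\Delta\scrC_{w,w'\tau(uv)}(n,b)\,g(b)$, and both derive it from \cref{lem:long}. The paper first sums \eqref{eq:double_difference} over $j$ against the weights $g(j)$ and then telescopes in $i$; you telescope in $i$ first and then in $m$.

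The ``main obstacle'' you describe does not exist, so your main line already closes. Summing \eqref{eq:double_difference} with $j=m$ over $i=n,\dots,N-m-1$ leaves only the endpoint terms $g(m)\bigl(f(n)^{-1}\Delta\scrC_{w,w'\tau(uv)}(n,m)-f(N-m)^{-1}\Delta\scrC_{w,w'\tau(uv)}(N-m,m)\bigr)$. The second of these cancels the remaining summand $\Delta\scrC_{wuv,w'}(N-m,m)$ by the boundary identity. The surviving coefficient is therefore the constant $f(n)^{-1}$, and you get
\[
f(n)\bigl(\scrC_{wuv,w'}(n,m)-\scrC_{wuv,w'}(n,m+1)\bigr)=g(m)\,\Delta\scrC_{w,w'\tau(uv)}(n,m)\qquad(n+m\leq N).
\]
This is exactly your difference identity, so the $H$/$K$ double-sum reorganisation is unnecessary.

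What your ordering buys is that this intermediate statement is pointwise rather than an identity between sums. It is equivalent to \cref{lem:long}: at $n+m=N$ it is the boundary identity, and dividing by $f(n)g(m)$ and differencing in $n$ gives \eqref{eq:double_difference}. Moreover, $\scrC_{wuv,w'}(n,m)-\scrC_{wuv,w'}(n,m+1)=\Delta\scrC_{w',wuv}(m,n)$ by the symmetry of the connector, so both sides are closed products by \cref{lem:Delta}. The identity can therefore be checked by a single ratio computation, of the same kind the paper carries out for $uv=AB$. That would bypass the three ratio identities and the quadratic identity in the proof of \cref{lem:long}. The paper's ordering, by contrast, needs only differences in the first argument and uses \eqref{eq:double_difference} exactly as stated.
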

\begin{proof}
Fix a nonnegative integer $i$ with $i+m<N$.
For $b=m,\ldots,N-i-1$, multiply \eqref{eq:double_difference} with $j=b$ by
\[
\frac{Aq^{-b}}{Aq^{-b}-\tilde{u}^{\bullet}}-\frac{Aq^{-b}}{Aq^{-b}-\tilde{v}^{\bullet}}.
\]
Summing these identities and adding the boundary identity in \cref{lem:long}, multiplied by the same factor with $b=N-i$, we obtain
\begin{align*}
&\left(\frac{Aq^{-i}}{Aq^{-i}-u^{\circ}}-\frac{Aq^{-i}}{Aq^{-i}-v^{\circ}}\right)^{-1}\sum_{b=m}^{N-i}\Delta\scrC_{w,w'\tau(uv)}(i,b)\left(\frac{Aq^{-b}}{Aq^{-b}-\tilde{u}^{\bullet}}-\frac{Aq^{-b}}{Aq^{-b}-\tilde{v}^{\bullet}}\right)\\
&\quad-\left(\frac{Aq^{-i-1}}{Aq^{-i-1}-u^{\circ}}-\frac{Aq^{-i-1}}{Aq^{-i-1}-v^{\circ}}\right)^{-1}\sum_{b=m}^{N-i-1}\Delta\scrC_{w,w'\tau(uv)}(i+1,b)\left(\frac{Aq^{-b}}{Aq^{-b}-\tilde{u}^{\bullet}}-\frac{Aq^{-b}}{Aq^{-b}-\tilde{v}^{\bullet}}\right)\\
&=\Delta\scrC_{wuv,w'}(i,m).
\end{align*}
For $i=N-m$, the same identity holds with the second term omitted, by the boundary identity in \cref{lem:long}.

Now fix $a$ with $n\leq a\leq N-m$ and sum the above identity over $i=a,\ldots,N-m$.
Then we have
\[
\sum_{b=m}^{N-a}\Delta\scrC_{w,w'\tau(uv)}(a,b)\left(\frac{Aq^{-b}}{Aq^{-b}-\tilde{u}^{\bullet}}-\frac{Aq^{-b}}{Aq^{-b}-\tilde{v}^{\bullet}}\right)=\left(\frac{Aq^{-a}}{Aq^{-a}-u^{\circ}}-\frac{Aq^{-a}}{Aq^{-a}-v^{\circ}}\right)\scrC_{wuv,w'}(a,m).
\]
Finally, summing over $a=n,\ldots,N-m$ and interchanging the two finite sums, we obtain
\begin{align*}
&\sum_{a=n}^{N-m}\sum_{b=m}^{N-a}\Delta\scrC_{w,w'\tau(uv)}(a,b)\left(\frac{Aq^{-b}}{Aq^{-b}-\tilde{u}^{\bullet}}-\frac{Aq^{-b}}{Aq^{-b}-\tilde{v}^{\bullet}}\right)\\
&=\sum_{b=m}^{N-n}\left(\sum_{a=n}^{N-b}\Delta\scrC_{w,w'\tau(uv)}(a,b)\right)\left(\frac{Aq^{-b}}{Aq^{-b}-\tilde{u}^{\bullet}}-\frac{Aq^{-b}}{Aq^{-b}-\tilde{v}^{\bullet}}\right)\\
&=\sum_{b=m}^{N-n}\scrC_{w,w'\tau(uv)}(n,b)\left(\frac{Aq^{-b}}{Aq^{-b}-\tilde{u}^{\bullet}}-\frac{Aq^{-b}}{Aq^{-b}-\tilde{v}^{\bullet}}\right).
\end{align*}
Together with the preceding identity, this gives the desired formula.
\end{proof}
\section{The connected sum}
In this section, we define the connected sum using the connector introduced in the previous section and prove the main theorem.
\begin{definition}[The connected sum]\label{def:connected_sum}
For two words $w=u_1v_1\dots u_kv_k$, $w'=u_1'v_1'\cdots u_l'v_l'$, and $i\in[k]$, put
\begin{align*}
A^{(i),w,w'}&\coloneqq A^{(i),w},\\
B^{(i),w,w'}&\coloneqq B^{(i),w}q^{\#_{AB}(w')},\\
C^{(i),w,w'}&\coloneqq C^{(i),w}q^{\#_{AC}(w')},\\
D^{(i),w,w'}&\coloneqq D^{(i),w}q^{-\#_{\hat{A}}(w')}.
\end{align*}
When $(w,w')$ is an admissible pair, we define the \emph{connected sum} $Z(w,w')$ by
\begin{align*}
Z(w, w')\coloneqq&\sum_{\substack{0=n_0\leq n_1\leq \cdots \leq n_k\leq N \\ 0=m_0\leq m_1\leq \cdots \leq m_l\leq N}}\prod_{i=1}^k\left(\frac{Aq^{-n_i}}{Aq^{-n_i}-u_i^{(i),w,w'}}-\frac{Aq^{-n_i}}{Aq^{-n_i}-v_i^{(i),w,w'}}\right)\\
&\cdot \scrC_{w,w'}(n_k, m_l)\cdot\prod_{j=1}^l\left(\frac{Aq^{-m_j}}{Aq^{-m_j}-{u'_j}^{(j),w',w}}-\frac{Aq^{-m_j}}{Aq^{-m_j}-{v'_j}^{(j),w',w}}\right).
\end{align*}
Empty products are understood to be $1$.
\end{definition}
\begin{lemma}[Symmetry]
For every admissible pair $(w,w')$, we have
\[
Z(w,w')=Z(w',w).
\]
\end{lemma}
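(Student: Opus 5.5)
The plan is to compare the two defining sums term by term, after interchanging the roles of the two summation chains. Write $w=u_1v_1\cdots u_kv_k$ and $w'=u_1'v_1'\cdots u_l'v_l'$. By \cref{def:connected_sum}, $Z(w',w)$ is the sum over chains $0\le m_1\le\cdots\le m_l\le N$ for $w'$ and $0\le n_1\le\cdots\le n_k\le N$ for $w$ of
\[
\prod_{j=1}^l(\text{factor for }u_j'v_j'\text{ with shifts }{}^{(j),w',w})\cdot\scrC_{w',w}(m_l,n_k)\cdot\prod_{i=1}^k(\text{factor for }u_iv_i\text{ with shifts }{}^{(i),w,w'}).
\]
After renaming the summation variables, these are exactly the same three factors as in $Z(w,w')$, apart from the connector.

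First, observe that the admissibility of $(w,w')$ is symmetric by definition, so both sides are defined. This also rules out vanishing denominators on either side, since the moving factors coincide.

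Second, invoke the symmetry $\scrC_{w,w'}(n,m)=\scrC_{w',w}(m,n)$, noted right after \cref{def:connector}. It holds because the defining expression is invariant under simultaneously swapping $(n,\#_{\hat A}(w))$ with $(m,\#_{\hat A}(w'))$. In addition, $\beta_{w,w'}=\beta_{w',w}$ and $\gamma_{w,w'}=\gamma_{w',w}$, since $\#_{AB}(w)+\#_{AB}(w')$ and the analogous $AC$ count are symmetric. Also, the ${}_4\phi_3$ is symmetric in its upper parameters and in its lower parameters. The case $n+m>N$ is trivially symmetric, with both sides equal to $0$.

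Third, the products of letter factors depend only on the respective word together with the counts from the other word. The shift $^{(i),w,w'}$ for a letter of $w$ uses $w$ and $w'$ exactly as prescribed in $Z(w,w')$, and the same prescription is used in $Z(w',w)$. Hence, after interchanging the order of the finite product of the two letter-factor products (a commutative ring), the summands agree pointwise.

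There is no real obstacle here. The only step needing care is confirming that the connector symmetry holds including the hidden dependence through $\beta$ and $\gamma$, which is immediate from their symmetric definitions.
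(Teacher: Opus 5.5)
Your proposal is correct and follows the paper's argument: you interchange the two chains of summation indices, note that the letter factors with shifts ${}^{(i),w,w'}$ and ${}^{(j),w',w}$ appear identically in $Z(w,w')$ and $Z(w',w)$, and apply the connector symmetry $\scrC_{w,w'}(n,m)=\scrC_{w',w}(m,n)$. Your check of that symmetry, via $\beta_{w,w'}=\beta_{w',w}$, $\gamma_{w,w'}=\gamma_{w',w}$ and the swap $(n,\#_{\hat A}(w))\leftrightarrow(m,\#_{\hat A}(w'))$, spells out what the paper states is immediate from the definition.
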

\begin{proof}
This follows by interchanging the two sets of summation indices and using the symmetry of the connector.
\end{proof}
\begin{lemma}[The boundary condition]\label{lem:boundary_condition}
For every admissible word $w$, we have
\[
Z(w,1)=Z(1,w)=L_q(w).
\]
\end{lemma}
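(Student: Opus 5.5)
The plan is to reduce the statement to a single evaluation of the connector. First, by the symmetry lemma just proved, $Z(1,w)=Z(w,1)$, so it suffices to show $Z(w,1)=L_q(w)$. Put $w'=1$ in \cref{def:connected_sum}. Then $l=0$, so the $m$-chain consists only of $m_0=0$, the right-hand product is empty, and the connector appearing is $\scrC_{w,1}(n_k,0)$; when $w=1$ as well, it is $\scrC_{1,1}(0,0)$.

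Next I check that the shifted poles agree with those defining $L_q(w)$. Since $\#_{AB}(1)=\#_{AC}(1)=\#_{\hat A}(1)=0$, we have $u_i^{(i),w,1}=u_i^{(i),w}$ and $v_i^{(i),w,1}=v_i^{(i),w}$ for every $i$. Hence the sum defining $Z(w,1)$ is exactly the sum defining $L_q(w)$, with each summand multiplied by $\scrC_{w,1}(n_k,0)$. The whole statement therefore reduces to the identity
\[
\scrC_{w,1}(n,0)=1\qquad(0\le n\le N).
\]
This identity also covers the case $w=1$, since $Z(1,1)=\scrC_{1,1}(0,0)=1=L_q(1)$.

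To prove the identity, I first use \cref{lem:Delta} with $w'=1$ and $m=0$. The product there contains the factor $(q^{m+\#_{\hat A}(w')};q)_{N-n-m}=(1;q)_{N-n}$, which vanishes whenever $N-n\ge1$. Thus $\scrC_{w,1}(n,0)=\scrC_{w,1}(n+1,0)$ for $0\le n<N$, and by telescoping $\scrC_{w,1}(n,0)=\scrC_{w,1}(N,0)$ for all $0\le n\le N$.

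It remains to evaluate $\scrC_{w,1}(N,0)$ directly from \cref{def:connector}, writing $h=\#_{\hat A}(w)$.
\begin{itemize}
\item The power of $q$ is $q^{(N+h)\cdot 0}=1$.
\item The first quotient is $(\beta,\gamma;q)_{N+h}(\beta,\gamma;q)_0/(\beta,\gamma;q)_{N+h}=1$.
\item The middle factor consists of $q$-shifted factorials of length $N-n-m=0$, so it equals $1$.
\item The ${}_4\phi_3$ has numerator parameter $q^{0}=1$, so only its $j=0$ term survives, and that term is $1$.
\end{itemize}
Hence $\scrC_{w,1}(N,0)=1$, which completes the argument.

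The only delicate point is the legitimacy of applying \cref{lem:Delta} at $m=0$. One must check that $\scrC_{w,1}$ is well defined (no vanishing denominators) for the pair $(w,1)$ with $w$ admissible. The quantity $1-q^{N-n+\#_{\hat A}(w')}$ that appears in the proof of \cref{lem:Delta} was only divided by under the assumption $n+m<N$, and that is exactly the range used here. Under the standing algebraic independence of $q,A,B,C$, the remaining denominators are nonzero. I therefore do not expect a serious obstacle beyond this bookkeeping.
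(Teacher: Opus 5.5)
Your proposal is correct and matches the paper's proof: you reduce to $\scrC_{w,1}(n,0)=1$ for $0\le n\le N$, and you get $\Delta\scrC_{w,1}(n,0)=0$ for $n<N$ from the vanishing factor $(1;q)_{N-n}$ in \cref{lem:Delta}. You then telescope down from the direct evaluation $\scrC_{w,1}(N,0)=1$, and your explicit check that the shifted poles $u_i^{(i),w,1}$, $v_i^{(i),w,1}$ coincide with those in $L_q(w)$ is a step the paper leaves implicit.
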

\begin{proof}
It suffices to show that $\scrC_{w,1}(n,0)=1$ for $0\leq n\leq N$.
The definition gives $\scrC_{w,1}(N,0)=1$.
For $0\leq n<N$, by \cref{lem:Delta} and $(q^{0+\#_{\hat A}(1)};q)_{N-n}=0$, we have $\Delta\scrC_{w,1}(n,0)=0$.
Hence $\scrC_{w,1}(n,0)=\scrC_{w,1}(n+1,0)=\cdots=\scrC_{w,1}(N,0)=1$.
\end{proof}
\begin{proposition}[The transport relation]\label{prop:connected_sum_transport_rel}
Let $uv$ be a single letter, and assume that $(wuv,w')$ is an admissible pair.
Then we have
\[
Z(wuv,w')=Z(w,w'\tau(uv)).
\]
\end{proposition}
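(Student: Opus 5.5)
Write $w=u_1v_1\cdots u_kv_k$ and $w'=u'_1v'_1\cdots u'_lv'_l$, and set $u_{k+1}v_{k+1}=uv$. The plan is to write both sides of the transport relation as sums in which the only difference is the innermost block, and then apply \cref{prop:local_transport_rel} to that block with $n=n_k$ and $m=m_l$.

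For $Z(wuv,w')$, I will isolate the sum over $n_{k+1}=a$, which runs over $n_k\le a\le N-m_l$ because the connector vanishes when $a+m_l>N$. The first step is to match the pole shifts. For $i\le k$, the shifted parameters $u_i^{(i),wuv,w'}$ should coincide with $u_i^{(i),w,w'\tau(uv)}$. Check each one: $A^{(i)}$ depends only on $w_{[i]}$. The $B$ shift gains $\#_{CD}(uv)$ from $\#_{CD}((wuv)^{[i]})$ on one side and $\#_{AB}(\tau(uv))$ on the other, and these are equal. Similarly $C$ gains $\#_{BD}(uv)=\#_{AC}(\tau(uv))$, and $D$ gains $\#_{\hat D}(uv)=\#_{\hat A}(\tau(uv))$. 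Symmetrically, for $j\le l$, the parameters ${u'_j}^{(j),w',wuv}$ should coincide with ${u'_j}^{(j),w'\tau(uv),w}$, using the same three equalities with the roles exchanged.

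Next I check that the moving letter's own shifted parameters equal $u^\circ,v^\circ$ in the notation of the pair $(w,w')$. For example, $A^{(k+1),wuv}=Aq^{\#_{\hat A}(w)+\#_{\hat A}(uv)}$; this should be compared with $A^\circ$, possibly after the index shift $Aq^{-a}$. I expect these to match exactly, since $A^+$, $B^+$, \dots{} were introduced for that bookkeeping. Likewise, the new last letter $\tau(uv)$ of $w'\tau(uv)$ should carry the parameters $\tilde u^\bullet,\tilde v^\bullet$. For instance, $D^{(l+1),w'\tau(uv),w}=Dq^{-\#_{\hat D}(\tau(uv))-\#_{\hat A}(w)}$, and this must be reconciled with $D^\bullet=Dq^{-\#_{\hat A}(w)}$. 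This verification, letter by letter for all six letters and all four points, is the step I expect to be the main obstacle. The paper set up $\circ$, $\bullet$ and \cref{prop:local_transport_rel} precisely so that it works, but the exponents, especially the $+1$ in $B^\circ$ and the convention $\#_{CD}(w^{[i]})$ that includes position $i$, need careful checking.

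Once the matching is done, the computation is direct. We have
\[
Z(wuv,w')=\sum_{n,m}P_w(n)\,Q_{w'}(m)\sum_{a=n}^{N-m}\Bigl(\tfrac{Aq^{-a}}{Aq^{-a}-u^\circ}-\tfrac{Aq^{-a}}{Aq^{-a}-v^\circ}\Bigr)\scrC_{wuv,w'}(a,m),
\]
where $P_w(n)$ is the sum over $n_1\le\cdots\le n_k=n$ of the $w$-factors, and $Q_{w'}(m)$ is the analogous sum over the $m_j$. For $k=0$ this reads $n=0$, and for $l=0$ it reads $m=0$.

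By \cref{prop:local_transport_rel}, the inner sum equals $\sum_{b=m}^{N-n}\scrC_{w,w'\tau(uv)}(n,b)$ times the $\tilde u\tilde v$-factor. Its admissibility hypothesis holds because $(wuv,w')$ is admissible. The resulting expression is exactly $Z(w,w'\tau(uv))$ with $m_{l+1}=b$. The range $b\le N-n$ is harmless because the connector vanishes beyond it.

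Finally, I should confirm that $(w,w'\tau(uv))$ is admissible, so that the right-hand side is defined. This follows from the characterization that $w\tau(w')$ is admissible: $w\,\tau(w'\tau(uv))=wuv\,\tau(w')$.
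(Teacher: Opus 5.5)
Your plan is the paper's proof. You match the factors of $w$ and $w'$ in the two connected sums, identify the moving letter's parameters with $u^\circ,v^\circ$ and $\tilde u^\bullet,\tilde v^\bullet$, and apply \cref{prop:local_transport_rel} with $n=n_k$, $m=m_l$ for each fixed outer configuration. Your matching of the non-moving letters via $\#_{CD}(uv)=\#_{AB}(\tau(uv))$, $\#_{BD}(uv)=\#_{AC}(\tau(uv))$, $\#_{\hat D}(uv)=\#_{\hat A}(\tau(uv))$ and their $\tau$-images is exactly right. Your explicit check that $(w,w'\tau(uv))$ is admissible, via $w\,\tau(w'\tau(uv))=wuv\,\tau(w')$, is a useful addition that the paper leaves implicit.

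The one incomplete point is the step you call the main obstacle: you only state an expectation there. It is in fact a one-line check, but not for the reasons you suggest. No index shift $Aq^{-a}$ is involved. The quantities $A^+,B^+,\dots$ are also not the moving letter's parameters; they serve only in the proof of \cref{lem:long}. For instance, if $B\in\{u,v\}$ then $B^{(k+1),wuv,w'}=B^\circ=B^+/q$.

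What removes the apparent discrepancies is that only the two points actually occurring in the letter enter the moving factor.
\begin{itemize}
\item If $X\in\{u,v\}$, then $\#_{\hat X}(uv)=0$. This gives $A^{(k+1),wuv,w'}=A^\circ$ and $D^{(k+1),wuv,w'}=D^\circ$. In the same way, your expression $Dq^{-\#_{\hat D}(\tau(uv))-\#_{\hat A}(w)}$ equals $D^\bullet$, because $D\in\tau(uv)$ forces $\#_{\hat D}(\tau(uv))=0$.
\item If $B\in\{u,v\}$, then $uv\in\{AB,BC,BD\}$. Hence $\#_{\hat A}(uv)+\#_{AB}(uv)=1$ and $\#_{CD}(uv)=0$, which produces exactly the $+1$ in the exponent of $B^\circ$.
\item The case of $C$ is the same, using $\{AC,BC,CD\}$ and $\#_{BD}$.
\item The $\bullet$ identities follow by the same argument applied to the letter $\tau(uv)$ in the pair $(w'\tau(uv),w)$.
\end{itemize}
With this filled in, your argument is complete.
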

\begin{proof}
Let $k$ and $l$ be the lengths of $w$ and $w'$, respectively, and write $\tau(uv)=\tilde{u}\tilde{v}$.
For every $i\in[k]$ and $X\in\{A,B,C,D\}$, we have $X^{(i),wuv,w'}=X^{(i),w,w'\tau(uv)}$.
For example,
\[
B^{(i),wuv,w'}=B^{(i),w,w'}q^{\#_{CD}(uv)}=B^{(i),w,w'}q^{\#_{AB}(\tau(uv))}=B^{(i),w,w'\tau(uv)}.
\]
Similarly, for every $j\in[l]$ and $X\in\{A,B,C,D\}$, $X^{(j),w',wuv}=X^{(j),w'\tau(uv),w}$.
Thus all factors corresponding to the letters of $w$ and $w'$ are the same in the two connected sums.

Next, with the notation of \cref{prop:local_transport_rel}, we have
\[
u^{(k+1),wuv,w'}=u^{\circ}, \quad v^{(k+1),wuv,w'}=v^{\circ}, \quad \tilde{u}^{(l+1),w'\tau(uv),w}=\tilde{u}^{\bullet},\quad \tilde{v}^{(l+1),w'\tau(uv),w}=\tilde{v}^{\bullet}.
\]
For example, if $B\in\{u,v\}$, then $uv\in\{AB,BC,BD\}$.
Hence $\#_{\hat{A}}(uv)+\#_{AB}(uv)=1$ and $\#_{CD}(uv)=0$.
It follows from the definitions that
\[
B^{(k+1),wuv,w'}=Bq^{\#_{\hat{A}}(wuv)+\#_{AB}(wuv)+\#_{CD}(uv)+\#_{AB}(w')}=Bq^{1+\#_{\hat{A}}(w)+\#_{AB}(w)+\#_{AB}(w')}=B^\circ.
\]

Now fix the summation indices belonging to $w$ and $w'$:
\[
0\leq n_1\leq\cdots\leq n_k\leq N,
\qquad
0\leq m_1\leq\cdots\leq m_l\leq N.
\]
For an empty word, we use the convention $n_0=m_0=0$.
If $n_k+m_l>N$, the corresponding contributions to both connected sums vanish, since the connector is zero when the sum of its arguments exceeds $N$.
Otherwise, \cref{prop:local_transport_rel} gives
\begin{align*}
&\sum_{n_{k+1}=n_k}^{N-m_l}\left(\frac{Aq^{-n_{k+1}}}{Aq^{-n_{k+1}}-u^{(k+1),wuv,w'}}-\frac{Aq^{-n_{k+1}}}{Aq^{-n_{k+1}}-v^{(k+1),wuv,w'}}\right)\scrC_{wuv,w'}(n_{k+1},m_l)\\
&=\sum_{m_{l+1}=m_l}^{N-n_k}\scrC_{w,w'\tau(uv)}(n_k,m_{l+1})\left(\frac{Aq^{-m_{l+1}}}{Aq^{-m_{l+1}}-\tilde{u}^{(l+1),w'\tau(uv), w}}-\frac{Aq^{-m_{l+1}}}{Aq^{-m_{l+1}}-\tilde{v}^{(l+1),w'\tau(uv), w}}\right).
\end{align*}
Multiplying this identity by the common factors corresponding to the letters of $w$ and $w'$, and summing over all choices of $n_1,\ldots,n_k,m_1,\ldots,m_l$, yields the desired formula.
\end{proof}
\begin{proof}[Proof of \cref{thm:main}]
Let $w$ be an admissible word of length $k$.
We repeatedly apply \cref{prop:connected_sum_transport_rel} to move the last letter of the left word to the right.
All intermediate pairs are admissible.
Together with \cref{lem:boundary_condition}, this gives
\begin{align*}
L_q(w)&=Z(w,1)=Z(w_{[k-1]},\tau(w^{[k]}))=Z(w_{[k-2]},\tau(w^{[k-1]}))=\cdots=Z(w_{[1]},\tau(w^{[2]}))\\
&=Z(1,\tau(w))=L_q(\tau(w)).
\end{align*}
(Taken literally, the displayed chain applies only when $k\geq2$; for $k=0,1$, the same conclusion follows from the corresponding shorter chain.)
\end{proof}
\subsection*{Acknowledgement}
In mathematics, the discovery of a problem or the formulation of a conjecture is often an achievement of greater significance than its resolution.
This is all the more worth emphasizing in an era when generative AI is accelerating the pace at which open problems are resolved.

Even before the preprint~\cite{Hirose} appeared, Professor Minoru Hirose
had spoken to the author about his duality conjecture.
He had described its formulation in terms of six letters and the difficulty of proving it, despite its being a generalization of the classical duality.
The author wishes to express his deep appreciation to Professor Hirose for bringing this beautiful mathematical phenomenon to light.
\subsection*{Use of AI}
ChatGPT, powered by GPT-6 Astra Pro, was used for mathematical discussions and to refine the English prose of the manuscript.
The final manuscript and all mathematical arguments were written and independently verified by the author, who takes full responsibility for the content.

\end{document}